\newtheorem{theorem}{Theorem}
\newtheorem{corollary}[theorem]{Corollary}
\newtheorem{proposition}[theorem]{Proposition}
\newtheorem{example}[theorem]{Example}
\newtheorem{lemma}[theorem]{Lemma}
\title{Norm estimates for selfadjoint Toeplitz operators on the Fock space}
\author{Antonio Galbis}
\begin{document}
\maketitle
\abstract{An estimate for the norm of selfadjoint Toeplitz operators with a radial, bounded and integrable symbol is obtained. This emphasizes the fact that the norm of such operator is strictly less than the supremum norm of the symbol. Consequences for time-frequency localization operators are also given.}

\section{Introduction}
The Bargmann-Fock space ${\mathcal F}^2({\mathbb C})$ is the Hilbert space consisting of those analytic functions $f\in H({\mathbb C})$ such that
$$
\|f\|_{{\mathcal F}}^2 = \int_{{\mathbb C}}|f(z)|^2 e^{-\pi |z|^2}\ dA(z) < +\infty,
$$ where $dA(z)$ denotes the Lebesgue measure. ${\mathcal F}^2({\mathbb C})$ admits a reproducing kernel $K_w(z) = e^{\pi \overline{w}z},$ which means that $$f(w) = \langle f, K_w\rangle,\ \ f\in {\mathcal F}^2({\mathbb C}).$$ The normalized monomials 
$$
e_n(z) = \left(\frac{\pi^n}{n!}\right)^{\frac{1}{2}} z^n,\ \ n\geq 0,
$$ form an orthonormal basis. 
For a fixed $a\in {\mathbb C}$ the translation operator
$$
T_a:{\mathcal F}^2({\mathbb C})\to {\mathcal F}^2({\mathbb C}),\  \left(T_a f\right)(z) = f(z-a)e^{-\frac{\pi}{2} |a|^2 + \pi z\overline{a}},
$$ is an isometry (see \cite{zhu}). We denote $d\lambda(z) = e^{-\pi |z|^2} dA(z),$ so ${\mathcal F}^2({\mathbb C})$ is a closed subspace of $L^2({\mathbb C}, d\lambda).$ The orthogonal projection 
$$
P:L^2({\mathbb C}, d\lambda)\to {\mathcal F}^2({\mathbb C})$$ is the integral operator 
$$
\left(Pf\right)(z) = \int_{{\mathbb C}} f(w)K_w(z)\ d\lambda(w).$$ For a measurable and bounded function $F$ on ${\mathbb C}$ the Toeplitz operator with symbol $F$ is defined as 
$$
T_F(f)(z) = P(Ff)(z) = \int_{{\mathbb C}} F(w)f(w)K_w(z)\ d\lambda(w).$$ The systematic study of Toeplitz operators on the Fock space started in \cite{bc1,bc2}. Since then it has been a very active research area. We refer to \cite[Chapter 6]{zhu-book}, where boundedness and membership in the Schatten classes is discussed.

It is obvious that $$T_F:{\mathcal F}^2({\mathbb C})\to {\mathcal F}^2({\mathbb C})$$ is a bounded operator and 
$$
\|T_F(f)\| \leq \|Ff\|_{L^2({\mathbb C}, d\lambda)} \leq \|F\|_\infty\cdot \|f\|.$$ In particular, $\|T_F\|\leq 1$ whenever $\|F\|_\infty\leq 1.$ If moreover $T_F$ is compact, which happens for instance when $F\in L^1({\mathbb C}),$ then $\|T_F\|$ is strictly less than 1 but, as far as we know, no precise estimate for the norm is known. The main result of the paper gives a bound for $\|T_F\|$ in the case that the symbol $F$ is radial, real-valued, and satisfies some integrability condition. For Toeplitz operators with radial symbols we refer to \cite{gv}. 
\par\medskip
Besides Toeplitz operators on the Fock space we consider time-frequency localization operators with Gaussian window, also known as anti-Wick operators. They where introduced by Daubechies \cite{d} as filters in signal analysis and can be obtained from Toeplitz operators on the Fock space after applying Bargmann transform.

\section{Toeplitz operators on the Fock space}

The Toeplitz operator defined by a real valued symbol $F$ is self-adjoint. This is immediate form the identity

$$
\langle T_F(f), g\rangle = \int_{{\mathbb C}} F(z) f(z)\overline{g(z)}d\lambda(z)$$ for all $f,g\in {\mathcal F}^2({\mathbb C}).$ In this case we have
$$
\|T_F\| = \sup_{\|f\|=1}\left|\langle T_F(f), f\rangle\right| \leq \sup_{\|f\|=1}\int_{{\mathbb C}} |F(z)|\cdot |f(z)|^2d\lambda(z).$$

A symbol $F$ is said to be radial with respect to $a\in {\mathbb C}$ if $F(z) = g(|z-a|)$ for some bounded and measurable function $g$ on $[0, +\infty).$ The main result of the paper is as follows.

\begin{theorem}\label{th:main} Let $F\in L^1({\mathbb C})\cap L^\infty({\mathbb C})$ be a real-valued and radial symbol with respect to $a\in {\mathbb C}.$ Then 
	$$
	\|T_F\|\leq \|F\|_\infty \Big(1 - \exp\Big(-\frac{\|F\|_1}{\|F\|_\infty}\Big)\Big).$$
\end{theorem}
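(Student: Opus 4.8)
The plan is to use the radial symmetry to diagonalise $T_{F}$, turning the operator-norm bound into a one-dimensional estimate on the weights $\tfrac{t^{n}e^{-t}}{n!}$, which I would then settle by a short convolution argument. First I would remove the centre $a$: since $T_{a}$ is unitary with $T_{a}^{-1}=T_{-a}$, a change of variables in $\langle T_{F}(T_{a}f),T_{a}g\rangle$ shows $T_{a}^{\ast}T_{F}T_{a}=T_{F(\cdot+a)}$, whence $\|T_{F}\|=\|T_{F(\cdot+a)}\|$, while $F(\cdot+a)$ is radial about the origin and has the same $L^{1}$ and $L^{\infty}$ norms as $F$. So we may assume $F(z)=g(|z|)$.

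Next, the radial symmetry makes $T_{F}$ diagonal in the basis $\{e_{n}\}$. Evaluating $\langle T_{F}e_{n},e_{n}\rangle$ in polar coordinates and substituting $t=\pi r^{2}$ gives $T_{F}e_{n}=\lambda_{n}e_{n}$ with $\lambda_{n}=\int_{0}^{\infty}h(t)\,p_{n}(t)\,dt$, where $h(t)=g(\sqrt{t/\pi})$ and $p_{n}(t)=\tfrac{t^{n}e^{-t}}{n!}$ is a probability density on $(0,\infty)$; moreover $\|h\|_{\infty}=\|F\|_{\infty}$, $\|h\|_{1}=\|F\|_{1}$, and $\|T_{F}\|=\sup_{n\ge0}|\lambda_{n}|$ (assuming $F\not\equiv0$). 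Since $h$ is real, $|\lambda_{n}|\le\int_{0}^{\infty}|h|\,p_{n}\,dt$; writing $|h|=\|F\|_{\infty}u$ with $0\le u\le1$ and $\int_{0}^{\infty}u\,dt=M:=\|F\|_{1}/\|F\|_{\infty}$, the bathtub principle bounds $\int_{0}^{\infty}|h|\,p_{n}\,dt$ by $\|F\|_{\infty}\sup\{\int_{E}p_{n}\,dt:\ |E|=M\}$. As $p_{n}$ is unimodal, its super-level sets are intervals, so the optimal $E$ is an interval of length $M$ and this supremum equals $\Phi_{n}:=\sup_{x\ge0}\int_{x}^{x+M}p_{n}(t)\,dt$.

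It then remains to prove $\Phi_{n}\le1-e^{-M}$ for every $n$, which gives $|\lambda_{n}|\le\|F\|_{\infty}(1-e^{-M})$ and hence the theorem after taking the supremum over $n$. For $n=0$ the weight $e^{-t}$ is decreasing, so the best window is $[0,M]$ and $\Phi_{0}=1-e^{-M}$. For $n\ge1$ I would use the elementary identity $p_{n}=p_{0}\ast p_{n-1}$ (convolution on $(0,\infty)$, proved by induction or read off from the Laplace transforms $(1+\xi)^{-(n+1)}$), which yields by Fubini $\int_{x}^{x+M}p_{n}(t)\,dt=\int_{0}^{\infty}p_{0}(u)\big(\int_{x-u}^{x-u+M}p_{n-1}(s)\,ds\big)\,du\le\Phi_{n-1}\int_{0}^{\infty}p_{0}(u)\,du=\Phi_{n-1}$, so $\Phi_{n}\le\Phi_{n-1}\le\cdots\le\Phi_{0}=1-e^{-M}$.

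I expect the heart of the matter to be this last step: recognising that the right quantity to track is the ``windowed mass'' $\Phi_{n}$ and that the convolution structure $p_{n}=p_{0}\ast p_{n-1}$ forces it to be non-increasing in $n$, so that the exponential $1-e^{-M}$ coming from the $n=0$ weight dominates all the others. The reductions in the second step — from arbitrary unit-ball symbols to super-level sets via the bathtub principle, and then to intervals via unimodality of $p_{n}$ — are routine but must be set up carefully, as is the bookkeeping identifying $\|h\|_{1},\|h\|_{\infty}$ with $\|F\|_{1},\|F\|_{\infty}$; it is precisely this chain that pins the bound at $1-e^{-M}$ rather than at the trivial $M$.
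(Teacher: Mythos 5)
Your proof is correct, and in its two key technical steps it takes a genuinely different route from the paper. Both arguments reduce the problem to bounding $\sup_{n}\int_{0}^{\infty}|h(t)|\,\frac{t^{n}}{n!}e^{-t}\,dt$ for $h(t)=g(\sqrt{t/\pi})$ (the paper does this through the quadratic form $\int|F|\,|f|^{2}\,d\lambda$ expanded in the basis $(e_{n})$, you through the explicit diagonalization $T_{F}e_{n}=\lambda_{n}e_{n}$ --- essentially the same reduction). From there the paper approximates $|g|$ by $[0,1]$-valued step functions, proves the core inequality $\frac{1}{n!}\int_{I}s^{n}e^{-s}\,ds\le 1-e^{-|I|}$ for an \emph{arbitrary} measurable set $I$ by translating $I$ onto an interval straddling the mode of $s^{n}e^{-s}$ and running a binomial expansion against Poisson weights, then extends to fractional coefficients by an induction on the number of $\varepsilon_{k}\in(0,1)$ with a critical-point argument, and finally passes to the limit using $\|T_{F}-T_{F_{n}}\|\le\|F-F_{n}\|_{1}$. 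You replace the last two stages entirely by the bathtub principle, which collapses a general $[0,1]$-valued symbol directly to the indicator of a super-level set of $p_{n}$ (an interval, by unimodality) with no approximation or induction; and you replace the binomial computation by the convolution identity $p_{n}=p_{0}\ast p_{n-1}$, which shows the windowed mass $\Phi_{n}$ is non-increasing in $n$ and so is dominated by $\Phi_{0}=1-e^{-M}$. Your version is shorter and more conceptual. What the paper's formulation buys is that its Lemma~\ref{lem:firstbound_integral} is stated for arbitrary measurable sets, which is reused verbatim in Proposition~\ref{prop} where the relevant sets $I_{\theta}$ are not intervals; note, though, that your convolution step generalizes with no extra work (replace the window $[x,x+M]$ by a translate $I-u$ of a general set $I$ and induct on $\sup_{|E|=M}\int_{E}p_{n}$), so nothing essential is lost. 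Two small points to tidy up: state explicitly that a diagonal operator's norm is the supremum of its eigenvalue moduli, and observe that for $x-u<0$ the truncated window $[x-u,x-u+M]\cap[0,\infty)$ has length at most $M$, so the bound by $\Phi_{n-1}$ still applies.
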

\par\medskip
For the proof we will need some auxiliary results. First we observe that for $|F(z)| = g(|z|)$ and $\displaystyle f = \sum_{n=0}^\infty b_n e_n$ we have, after changing to polar coordinates, 
$$
\begin{array}{*2{>{\displaystyle}l}}
\int_{{\mathbb C}} |F(z)|\cdot |f(z)|^2d\lambda(z) & = \sum_{n=0}^\infty |b_n|^2\int_{{\mathbb C}}g(|z|)|e_n(z)|^2\ d\lambda(z) \\ & \\ & = \sum_{n=0}^\infty |b_n|^2 2\pi \int_0^\infty g(r)\frac{r^{2n+1}}{n!}e^{-\pi r^2}\ dr \\ & \\ & = \sum_{n=0}^\infty |b_n|^2 \int_0^\infty g\Big(\sqrt{\frac{t}{\pi}}\Big)\frac{t^n}{n!} e^{-t}\ dt.
\end{array}
$$
The $d$-dimensional Lebesgue measure of a set $\Omega\subset {\mathbb R}^d$ is denoted $|\Omega|$ both for $d=1$ and $d=2.$

\begin{lemma}\label{lem:firstbound_integral} Let $I\subset [0, +\infty)$ be a measurable set with finite Lebesgue measure. Then
	$$
	\frac{1}{n!}\int_I s^ne^{-s}\ ds \leq 1-e^{-|I|}.$$
\end{lemma}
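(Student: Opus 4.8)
The plan is to reduce to the case in which $I$ is an interval, via a rearrangement (bathtub) argument, and then to prove the inequality for intervals by an explicit calculation.

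Set $\phi_n(s)=\dfrac{s^n e^{-s}}{n!}$, a probability density on $[0,+\infty)$. First I would record that $\phi_n$ is unimodal, since $\phi_n'(s)=\dfrac{s^{n-1}e^{-s}}{n!}(n-s)$ is positive on $(0,n)$ and negative on $(n,+\infty)$ when $n\ge 1$ (and $\phi_0=e^{-s}$ is decreasing). Consequently each superlevel set $\{\phi_n>t\}$ is an interval, and as each level set $\{\phi_n=t\}$ has at most two points, $t\mapsto|\{\phi_n>t\}|$ is continuous, decreasing, and (letting $t\to 0^+$ and $t\to\sup\phi_n$) attains every value in $(0,+\infty)$. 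Given a measurable $I$ with $c:=|I|\in(0,+\infty)$, choose $t>0$ with $|\{\phi_n>t\}|=c$ and call this interval $I^\ast$. Since $|I|=|I^\ast|=c<\infty$ we have $|I\setminus I^\ast|=|I^\ast\setminus I|$, and because $\phi_n\le t$ off $I^\ast$ and $\phi_n\ge t$ on $I^\ast$,
$$\int_I\phi_n=\int_{I\cap I^\ast}\phi_n+\int_{I\setminus I^\ast}\phi_n\le\int_{I\cap I^\ast}\phi_n+\int_{I^\ast\setminus I}\phi_n=\int_{I^\ast}\phi_n .$$
I would include these two lines explicitly rather than cite the bathtub principle. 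This reduces the claim to intervals $I=[\alpha,\alpha+c]$ with $\alpha\ge 0$.

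For such an interval I would substitute $s=\alpha+u$ and expand $(\alpha+u)^n$ by the binomial theorem, turning the integral into $e^{-\alpha}\sum_{j=0}^{n}\frac{\alpha^{n-j}}{(n-j)!\,j!}\int_0^c u^je^{-u}\,du$. The elementary identity $\int_0^c u^je^{-u}\,du=j!\bigl(1-e^{-c}\sum_{i=0}^j c^i/i!\bigr)$ gives $\int_0^c u^je^{-u}\,du\le j!\,(1-e^{-c})$ since $c\ge0$, and combined with $\sum_{k=0}^n \alpha^k/k!\le e^{\alpha}$ this collapses the sum to at most $e^{-\alpha}(1-e^{-c})e^{\alpha}=1-e^{-c}$, the desired bound. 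The cases $c=0$ and $c=+\infty$ are immediate ($\int_I\phi_n$ equals $0$, resp.\ is at most $1=1-e^{-\infty}$).

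I expect the only real obstacle to be making the first step airtight for an arbitrary measurable $I$: checking that a superlevel set of the prescribed measure exists, that it is an interval, and that the rearrangement comparison is valid; the finiteness hypothesis $|I|<\infty$ is used precisely in the equality $|I\setminus I^\ast|=|I^\ast\setminus I|$. Everything after the reduction is routine calculus, and the inequality is sharp (equality holds when $n=0$ and $I=[0,c]$).
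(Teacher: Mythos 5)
Your proof is correct. The second half --- translating the interval to the origin, expanding $(\alpha+u)^n$ binomially, invoking the identity $\int_0^c u^je^{-u}\,du=j!\bigl(1-e^{-c}\sum_{i=0}^j c^i/i!\bigr)$ to bound each term by $j!(1-e^{-c})$, and absorbing $\sum_{k=0}^n\alpha^k/k!$ into $e^{\alpha}$ --- is essentially identical to the paper's computation (the paper phrases the last two steps as a convex combination bounded by $\sup_k\frac{1}{k!}\int_0^{|I|}t^ke^{-t}\,dt$, which is attained at $k=0$; this is the same estimate). Where you genuinely diverge is the reduction to an interval. The paper first restricts to finite unions of bounded intervals, slides the mass on either side of the mode $t_n=n$ inward to obtain an interval $[a,b]$ with $t_n-a=|I\cap[0,t_n]|$ and $b-t_n=|I\cap[t_n,+\infty)|$, and then handles general measurable $I$ by an approximation argument in a separate step (b). You instead compare $I$ directly with the superlevel set $I^\ast=\{\phi_n>t\}$ of the same measure via the bathtub principle, which you correctly justify in two lines from unimodality and $|I\setminus I^\ast|=|I^\ast\setminus I|$. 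This buys a cleaner treatment: arbitrary measurable sets are handled in one pass, with no approximation step and no need to verify stability of the inequality under symmetric-difference perturbations. The small points you flag as needing care (existence of a superlevel set of prescribed measure, via continuity of $t\mapsto|\{\phi_n>t\}|$ and its limits at $0^+$ and at $\sup\phi_n$; the interval structure of superlevel sets) are all handled correctly, including the monotone case $n=0$.
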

\begin{proof}
(a) We first assume that $I$ is a finite union of bounded intervals. Let $t_n > 0$ the absolute maximum of $h(s) = \frac{s^n}{n!}e^{-s}.$ Then $h$ increases on $[0,t_n]$ and decreases on $[t_n, +\infty).$ We consider $a\leq t_n\leq b$ such that 
	$$
	t_n - a = |I\cap [0, t_n]|\ ,\ b-t_n = |I\cap [t_n, +\infty)|.$$ Then
	$$
	\begin{array}{*2{>{\displaystyle}l}}
		\frac{1}{n!}\int_I s^n e^{-s}\ ds & \leq \int_a^b h(s)\ ds = \frac{e^{-a}}{n!}\int_0^{b-a}(t+a)^n e^{-t}\ dt \\ & \\ & 
		= \sum_{k=0}^n \binom{n}{k}\frac{a^{n-k}}{n!}e^{-a}\int_0^{|I|} t^k e^{-t}\ dt \\ & \\ & = 
		\sum_{k=0}^n \frac{a^{n -k}}{(n -k)!}e^{-a}\frac{1}{k!}\int_0^{|I|} t^k e^{-t}\ dt\\ & \\ & \leq \sup_{0\leq k\leq 
			n}\frac{1}{k!}\int_0^{|I|} t^k e^{-t}\ dt = \int_0^{|I|}e^{-t}\ dt.
	\end{array}
	$$ For the last identity observe that 
	$$
	\frac{1}{k!}\int_0^{s} t^k e^{-t}\ dt = 1 - e^{-s}\sum_{j=0}^k \frac{s^j}{j!}.
	$$	
(b) For a general measurable set $I$ with finite measure the conclusion follows from part (a) and the fact that for every $\varepsilon > 0$ there is a set $J,$ finite union of bounded intervals, with the property that 
$$
|J\setminus I| + |I\setminus J| \leq \varepsilon.$$	
\end{proof}	

\begin{lemma}
	Let $(I_k)_{k=1}^N$ be disjoint sets with finite measure and $0\leq \varepsilon_k\leq 1$ for every $1\leq k\leq N.$ Then, for every $p\in {\mathbb N}_0$ we have
	$$
	\sum_{k=1}^N \varepsilon_k\int_{I_k}\frac{t^p}{p!}e^{-t}\ dt \leq 1 - \exp\big(-\sum_{k=1}^N \varepsilon_k |I_k|\big).$$
\end{lemma}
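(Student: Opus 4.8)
The plan is to deduce the statement from Lemma~\ref{lem:firstbound_integral} by means of a ``layer-cake'' decomposition of the coefficients $\varepsilon_k$, followed by a concavity (Jensen) argument. Note that Lemma~\ref{lem:firstbound_integral} is exactly the case $N=1$, $\varepsilon_1=1$ of the present statement (after writing a union of disjoint sets as a single set), so the only genuinely new feature to handle is the presence of the weights $\varepsilon_k\in[0,1]$.

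First I would use $\varepsilon_k=\int_0^1\mathbf{1}_{\{u\le\varepsilon_k\}}\,du$ for each $k$ and interchange the (finite) sum with the integral — all integrands being nonnegative, Tonelli applies — to obtain
\[
\sum_{k=1}^N \varepsilon_k\int_{I_k}\frac{t^p}{p!}e^{-t}\,dt
= \int_0^1\Big(\sum_{k:\,\varepsilon_k\ge u}\int_{I_k}\frac{t^p}{p!}e^{-t}\,dt\Big)\,du .
\]
Since the $I_k$ are pairwise disjoint, the inner sum equals $\int_{E_u}\frac{t^p}{p!}e^{-t}\,dt$, where $E_u:=\bigcup_{k:\,\varepsilon_k\ge u}I_k$ is a measurable set of finite measure $|E_u|=\sum_{k:\,\varepsilon_k\ge u}|I_k|$. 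Applying Lemma~\ref{lem:firstbound_integral} with $n=p$ and $I=E_u$ gives, for a.e.\ $u\in(0,1)$,
\[
\int_{E_u}\frac{t^p}{p!}e^{-t}\,dt \le 1-e^{-|E_u|}.
\]

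Integrating this bound over $u\in(0,1)$, it remains to show $\int_0^1\big(1-e^{-|E_u|}\big)\,du \le 1-\exp\big(-\sum_{k=1}^N\varepsilon_k|I_k|\big)$. The key identity here is, again by Tonelli, $\int_0^1|E_u|\,du=\sum_{k=1}^N|I_k|\int_0^1\mathbf{1}_{\{u\le\varepsilon_k\}}\,du=\sum_{k=1}^N\varepsilon_k|I_k|$, so the right-hand side is $1-\exp\big(-\int_0^1|E_u|\,du\big)$; the desired inequality is then exactly Jensen's inequality for the concave function $\varphi(x)=1-e^{-x}$ applied, on $[0,1]$ with Lebesgue measure, to the nonnegative function $u\mapsto|E_u|$.

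I do not expect a serious obstacle: the only point requiring care is the bookkeeping in the layer-cake step — ensuring that $E_u$ is measurable of finite measure so Lemma~\ref{lem:firstbound_integral} genuinely applies, and that the level sets $\{u:\varepsilon_k\ge u\}$ are handled consistently — after which the closing estimate is a one-line use of concavity. Alternatively, one can avoid invoking Jensen and prove $\int_0^1(1-e^{-|E_u|})\,du\le 1-e^{-\int_0^1|E_u|\,du}$ directly from the elementary inequality $e^{-a}-e^{-b}\le (b-a)e^{-a}$, but the concavity formulation seems cleanest.
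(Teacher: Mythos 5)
Your proof is correct, and it takes a genuinely different route from the paper's. The paper proceeds by induction on the number of weights $\varepsilon_k$ lying strictly in $(0,1)$: it regards the quantity $\sum_k\varepsilon_k\int_{I_k}\frac{t^p}{p!}e^{-t}\,dt+\exp(-\sum_k\varepsilon_k|I_k|)$ as a smooth function of those weights on a cube, handles the boundary of the cube by the induction hypothesis (where a weight is $0$ or $1$ the problem degenerates to one with fewer fractional weights, the base case being Lemma~\ref{lem:firstbound_integral}), and at an interior critical point uses the first-order conditions to replace each $\int_{I_j}\frac{t^p}{p!}e^{-t}\,dt$ by $|I_j|\exp(-\sum_k\varepsilon_k|I_k|)$ and concludes via $1+x\le e^x$. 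Your layer-cake decomposition $\varepsilon_k=\int_0^1\mathbf{1}_{\{u\le\varepsilon_k\}}\,du$ instead expresses the weighted sum as an average over $u$ of the unweighted quantities for the superlevel sets $E_u$ (which are finite unions of the $I_k$, hence measurable of finite measure, and $u\mapsto|E_u|$ is a step function, so all measurability issues are trivial), applies Lemma~\ref{lem:firstbound_integral} to each $E_u$, and closes with Jensen's inequality for the concave function $1-e^{-x}$. Both arguments rest entirely on Lemma~\ref{lem:firstbound_integral}; yours avoids the induction and the critical-point bookkeeping altogether and makes transparent that the weighted statement is just a convex combination of unweighted ones plus concavity, while the paper's variational argument is more elementary in that it uses nothing beyond one-variable calculus and $1+x\le e^x$.
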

\begin{proof}
	We denote $n$ the number of indexes $k$ such that $0 < \varepsilon_k < 1$ and we proceed by induction on $n.$ For $n = 0$ this is the content of lemma \ref{lem:firstbound_integral}. Let us now assume $n = 1.$ Let $1\leq j\leq N$ be the coordinate with the property that $0 < \varepsilon_j < $ and  check that 
	$$
	\psi(\varepsilon):= \sum_{k\neq j}\int_{I_k}\frac{t^p}{p!}e^{-t}\ dt + \varepsilon \int_{I_j}\frac{t^p}{p!}e^{-t}\ dt + \exp\big(-\sum_{k\neq j}|I_k| - \varepsilon |I_j|\big) \leq 1$$ for every $0 \leq \varepsilon \leq 1.$ In fact, $\psi(0) \leq 1$ and $\psi(1)\leq 1$ follow from Lemma \ref{lem:firstbound_integral}. Moreover, the critical point $\varepsilon_0$ of $\psi$ satisfies 
	$$
	\int_{I_j}\frac{t^p}{p!}e^{-t}\ dt = |I_j|\exp\big(-\sum_{k\neq j}|I_k| - \varepsilon_0 |I_j|\big).$$ Hence
	$$
	\begin{array}{*2{>{\displaystyle}l}}
		\psi(\varepsilon_0) & = \sum_{k\neq j}\int_{I_k}\frac{t^p}{p!}e^{-t}\ dt + \varepsilon_0|I_j|\exp\big(-\sum_{k\neq j}|I_k| - \varepsilon_0 |I_j|\big) \\ & \\ & + \exp\big(-\sum_{k\neq j}|I_k| - \varepsilon |I_j|\big)\\ & \\ & = \sum_{k\neq j}\int_{I_k}\frac{t^p}{p!}e^{-t}\ dt + \big(1 + \varepsilon_0|I_j|\big)\exp\big(-\sum_{k\neq j}|I_k| - \varepsilon |I_j|\big).
	\end{array}
	$$ Since 
	$$
	1 + \varepsilon_0|I_j| \leq \exp\big(\varepsilon_0 |I_j|\big)$$ we conclude 
	$$
	\psi(\varepsilon_0) \leq \sum_{k\neq j}\int_{I_k}\frac{t^p}{p!}e^{-t}\ dt + \exp\big(-\sum_{k\neq j}|I_k|\big) \leq 1.$$ 
	\par\medskip
	Let us assume that the Lemma holds for $n = \ell$ ($0\leq \ell < N$) and let $n = \ell +1.$  We consider the function $\psi:[0,1]^{\ell+1}\to {\mathbb R}$ defined by 
	$$
	\psi({\bm\varepsilon}):= \sum_{k=1}^{\ell+1}\varepsilon_k \int_{I_k}\frac{t^p}{p!}e^{-t}\ dt + \sum_j\int_{J_j}\frac{t^p}{p!}e^{-t}\ dt + \exp\big(-\sum_k\varepsilon_k |I_k| - \sum_j |J_j|\big)$$ for ${\bm \varepsilon} = (\varepsilon_1, \ldots, \varepsilon_{\ell+1}).$ The induction hypothesis means that $\psi({\bm \varepsilon}) \leq 1$ whenever ${\bm \varepsilon}$ is in the boundary of $[0,1]^{\ell+1}.$ The lemma is proved after checking that $\psi({\bm \varepsilon}_0) \leq 1,$ where ${\bm \varepsilon_0}$ is a critical point of $\psi.$ Proceeding as before,
	$$
	\begin{array}{*2{>{\displaystyle}l}}
		\psi({\bm \varepsilon}_0) & = \big(\sum_{k=1}^{\ell+1}\varepsilon_k |I_k| + 1\big)e^{-\sum_k \varepsilon_k |I_k|} e^{-\sum_j |J_j|} + \sum_j \int_{J_j}\frac{t^p}{p!}e^{-t}\ dt \\ & \\ & \leq \exp\big(-\sum_j |J_j|\big) + \sum_j \int_{J_j}\frac{t^p}{p!}e^{-t}\ dt \leq 1.
	\end{array}	
	$$
\end{proof}

{\it Proof of Theorem \ref{th:main}:} We first assume $a = 0,$ that is, $F$ is radial. After replacing $F$ by $G = \frac{F}{\|F\|_\infty}$ if necessary we can assume that $\|F\|_\infty = 1.$ Since $F$ is radial we have $F(z) = g\big(|z|\big).$ We aim to prove that 
$$
\int_{\mathbb C}\left|g\big(|z|\big)\right|\cdot \left|f(z)\right|^2 e^{-\pi|z|^2}\ dA(z) \leq 1 - \exp\Big(-2\pi\int_0^\infty r\left|g(r)\right|\ dr\Big)$$ for every entire function $f(z) = \displaystyle \sum_{n=0}^\infty b_p e_p$ such that $\displaystyle\sum_{p=0}^\infty |b_p|^2 = 1.$ We have
$$
\int_{\mathbb C}\left|g\big(|z|\big)\right|\cdot \left|F(z)\right|^2 e^{-\pi|z|^2}\ dA(z) = \sum_{p=0}^\infty |b_p|^2\int_0^\infty \left|g\Big(\sqrt{\frac{t}{\pi}}\Big)\right|\cdot \frac{t^p}{p!}e^{-t}\ dt.$$
Let us first assume 
\begin{equation}\label{eq:stepfunction}
	g = \sum_{k=1}^N \varepsilon_k \chi_{I_k},\ \left|\varepsilon_k\right|\leq 1,
\end{equation} where $(I_k)_{k=1}^N$ are disjoint intervals. Then, Lemma \ref{lem:firstbound_integral} gives 
$$
\begin{array}{*2{>{\displaystyle}l}}
	\sum_{p=0}^\infty |b_p|^2\int_0^\infty \left|g\Big(\sqrt{\frac{t}{\pi}}\Big)\right|\cdot \frac{t^p}{p!}e^{-t}\ dt & \leq 1 -	\exp\big(-\sum_{k=1}^N |\varepsilon_k| |J_k|\big)\\ & \\ & = 1 - \exp\big(-2\pi\int_0^\infty r|g(r)|\ dr\big)\\ & \\ & = 1 - \exp\big(-\|F\|_1\big).\end{array}$$ We used $J_k = \left\{t:\ \sqrt{\frac{t}{\pi}}\in I_k\right\}$ and 
$\displaystyle|J_k| = 2\pi\int_{I_k}r dr.$ Theorem \ref{th:main} is proved for $g$ as in (\ref{eq:stepfunction}). Let us now assume that $\|g\|_\infty \leq 1$ and $g\in L^1({\mathbb R}^+,rdr)\cap L^\infty({\mathbb R}^+).$ Then there is sequence $(g_n)_n$ of step functions as in (\ref{eq:stepfunction}) such that $$\lim_{n\to\infty}\int_0^\infty |g_n(r)-g(r)|\ r dr = 0.$$ We put $F_n(z):= g_n(|z|).$ Since 
$$
\lim_{n\to \infty}\|T_F - T_{F_n}\| \leq \lim_{n\to \infty}\|F_n - F\|_1 = 0,$$ we finally conclude
$$
\|T_F\|\leq 1 - \exp\big(-\|F\|_1\big).$$
\par\medskip
In the case $a\neq 0,$ the identity 
$$
\int_{{\mathbb C}} g\big(|z-a|\big)|f(z)|^2 d\lambda(z) = \int_{{\mathbb C}} g\big(|u|\big)\Big(T_{-a}f\Big)(u)d\lambda(u)$$ and the fact that $T_{-a}$ is an isometry gives the conclusion. We can also argue from the fact that $T_{-a}\circ T_F = T_G\circ T_{-a},$ where $G(z) = g\big(|z|\big).$\ \ $\Box$
\par\medskip
In particular, if $\Omega\subset {\mathbb C}$ presents radial symmetry with respect to some point then
\begin{equation}\label{eq:conjecture-fock}
\int_\Omega \left|f(z)\right|^2 d\lambda(z) \leq \Big(1 - e^{-|\Omega|}\Big)\cdot\int_{{\mathbb C}}\left|f(z)\right|^2 d\lambda(z)
\end{equation} for every $f\in {\mathcal F}^2({\mathbb C}).$
\par\medskip
The question arises whether inequality (\ref{eq:conjecture-fock}) holds for every subset $\Omega.$ This is related to a conjecture by Abreu and Speckbacher in \cite{abreu} (see the next section). We do not have an answer to this question except for monomials or its translates. 

\begin{example}\label{ex:reproducing-kernel} Let $k_w = e^{-\frac{\pi}{2}|w|^2}K_w$ be the normalized reproducing kernel of ${\mathcal F}^2({\mathbb C}).$ Then, for every set $\Omega\subset {\mathbb C}$ with finite measure we have
	$$
	\int_\Omega \left|k_w(z)\right|^2 d\lambda(z) \leq 1 - e^{-|\Omega|}.$$
\end{example}
\begin{proof}
	In fact, $k_w = T_w\left(e_0\right).$ Hence
	$$
	\int_\Omega |k_w(z)|^2 d\lambda(z) = \int_{\Omega-w} d\lambda(z) 
	$$ and the conclusion follows from the fact that the last integral attains its maximum when $\Omega$ is a disc centered at $w$ (see \cite[Proposition 9]{abreu}). 
\end{proof}
\par\medskip
It is easy to check that when $\Omega$ is a disc centered at point $\omega$ the inequality in Example \ref{ex:reproducing-kernel} is an identity.

\begin{proposition}\label{prop} Let $\Omega \subset {\mathbb R}^2$ be a set with finite measure. Then, for every $n\in {\mathbb N}$ and $a\in {\mathbb C},$
	$$
	\int_\Omega \left|T_a(e_n)(z)\right|^2 d\lambda(z)\leq 1-e^{-|\Omega|}.$$
\end{proposition}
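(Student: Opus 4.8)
The plan is to reduce the statement first to the case $a=0$ by a change of variables, and then to the one–dimensional estimate of Lemma~\ref{lem:firstbound_integral} by replacing $\Omega$ with a radially symmetric set of the same Lebesgue measure.

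First I would record that, since $-\pi|z|^2-\pi|a|^2+2\pi\operatorname{Re}(z\overline a)=-\pi|z-a|^2$, one has the pointwise identity $|T_a(e_n)(z)|^2e^{-\pi|z|^2}=|e_n(z-a)|^2e^{-\pi|z-a|^2}$; consequently $\int_\Omega|T_a(e_n)|^2\,d\lambda=\int_{\Omega-a}|e_n|^2\,d\lambda$ and $|\Omega-a|=|\Omega|$, so it suffices to prove $\int_\Omega|e_n(z)|^2\,d\lambda(z)\le 1-e^{-|\Omega|}$ for an arbitrary measurable $\Omega$ of finite measure $m:=|\Omega|$. Put $h(z):=|e_n(z)|^2e^{-\pi|z|^2}=\frac{\pi^n}{n!}|z|^{2n}e^{-\pi|z|^2}$, a nonnegative radial function with $\int_{\mathbb C}h\,dA=\|e_n\|^2=1$ whose radial profile $\phi(r)=\frac{\pi^n}{n!}r^{2n}e^{-\pi r^2}$ increases on $[0,\sqrt{n/\pi}]$ and decreases afterwards.

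The heart of the argument is a rearrangement step. I would choose $c>0$ so that the superlevel set $A_c:=\{z:h(z)>c\}$ has measure exactly $m$; because $\phi$ is unimodal, $A_c$ is an annulus $\{r_1<|z|<r_2\}$ (a disc when $n=0$), and because $\phi$ is real-analytic its level sets are finite, so $\{h=c\}$ is Lebesgue-null and $c\mapsto|\{h>c\}|$ decreases continuously from $\infty$ to $0$ — hence the required $c$ exists. Then, since $h\le c$ a.e.\ on $\Omega\setminus A_c$, $h> c$ on $A_c\setminus\Omega$, and $|\Omega|=|A_c|$,
\[
\int_\Omega h\,dA-\int_{A_c}h\,dA=\int_{\Omega\setminus A_c}h\,dA-\int_{A_c\setminus\Omega}h\,dA\le c\,|\Omega\setminus A_c|-c\,|A_c\setminus\Omega|=0.
\]
(This is of course the bathtub principle; for $n=0$ it is exactly the fact quoted in Example~\ref{ex:reproducing-kernel}.) Finally, passing to polar coordinates and substituting $t=\pi r^2$,
\[
\int_{A_c}h\,dA=2\pi\int_{r_1}^{r_2}\frac{\pi^n}{n!}r^{2n}e^{-\pi r^2}\,r\,dr=\frac{1}{n!}\int_{\pi r_1^2}^{\pi r_2^2}t^ne^{-t}\,dt,
\]
and the interval $[\pi r_1^2,\pi r_2^2]$ has length $\pi r_2^2-\pi r_1^2=|A_c|=m$; applying Lemma~\ref{lem:firstbound_integral} to this interval gives $\int_{A_c}h\,dA\le 1-e^{-m}$, which together with the previous line closes the proof (the case $m=0$ being trivial).

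I expect the only genuinely delicate point to be the middle step: one has to be confident that the extremal set for $\int_\Omega h$ among sets of measure $m$ can be taken to be a radially symmetric annulus of precisely that measure, and that every $m\in(0,\infty)$ is attained by such an annulus. This is where the unimodality and analyticity of $\phi$ are used. Everything else is the elementary inequality displayed above together with the already-established Lemma~\ref{lem:firstbound_integral}.
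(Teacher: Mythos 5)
Your proof is correct, but it follows a genuinely different route from the paper. The paper keeps the set $\Omega$ as it is, slices it along rays through the origin, applies Lemma~\ref{lem:firstbound_integral} to each one-dimensional slice $I_\theta=\{\pi r^2:\ re^{i\theta}\in\Omega\}$, and then averages the resulting bounds $1-e^{-|I_\theta|}$ over $\theta$ using Jensen's inequality for the convex function $t\mapsto e^{-t}-1$, together with the identity $\int_0^{2\pi}|I_\theta|\,\frac{d\theta}{2\pi}=|\Omega|$. You instead symmetrize first: by the bathtub principle you replace $\Omega$ by the superlevel set of $|e_n(z)|^2e^{-\pi|z|^2}$ of the same measure, which by unimodality of the radial profile is an annulus, and then you need only one application of Lemma~\ref{lem:firstbound_integral}, to a single interval of length $|\Omega|$. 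All the individual steps check out: the translation reduction is the same as the paper's, your verification of the bathtub inequality is the standard one and is complete, the existence of the correct level $c$ is properly justified (continuity of $c\mapsto|\{h>c\}|$ because the level sets are null, plus the intermediate value theorem), and the polar-coordinate substitution $t=\pi r^2$ correctly converts the annulus of area $m$ into an interval of length $m$. Your argument buys slightly more than the paper's: it identifies the extremal set for $\int_\Omega|e_n|^2\,d\lambda$ among sets of prescribed measure as an annulus, which is a sharper structural statement and is in the spirit of the Abreu--Speckbacher conjecture discussed later in the paper; the paper's slicing-plus-Jensen argument is more elementary in that it requires no rearrangement input and no existence argument for the level $c$. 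Both proofs rest on the same key lemma.
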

\begin{proof} Since
	$$
	\int_\Omega \left|T_a(e_n)(z)\right|^2 d\lambda(z) = \int_{\Omega-a} \left|e_n(z)\right|^2 d\lambda(z)$$ we can assume that $a = 0.$ For every $\theta\in [0,2\pi]$ we denote $$
	\Omega_\theta = \left\{r \geq 0:\ re^{i\theta}\in\Omega\right\}.$$ Then 
	$$
	\begin{array}{*2{>{\displaystyle}l}}
		\int_\Omega \left|e_n(z)\right|^2 d\lambda(z) & = \frac{\pi^n}{n!}\int_\Omega \left|z^n\right|^2 e^{-\pi |z|^2}\ dA(z) \\ & \\ & = \frac{\pi^n}{n!}\int_0^{2\pi}\Big(\int_{\Omega_\theta}r^{2n}e^{-\pi r^2}2\pi r\ dr\Big)	\frac{d\theta}{2\pi}\\ & \\ & = \int_0^{2\pi}\Big(\int_{I_\theta}\frac{t^n}{n!} e^{-t}\ dt\Big) \frac{d\theta}{2\pi},
	\end{array}
	$$ where
	$$
	I_\theta = \left\{t = \pi r^2:\ r\in\Omega_\theta\right\}.$$ Since $|\Omega| < \infty$ then a.e. $\theta\in [0,2\pi]$ we have
	$$
	|I_\theta| = 2\pi\int_{\Omega_\theta}r dr < +\infty.$$ Moreover, by Lemma \ref{lem:firstbound_integral}, 
	$$
	\int_0^{2\pi}\Big(\int_{I_\theta}\frac{t^n}{n!} e^{-t}\ dt\Big) \frac{d\theta}{2\pi} \leq \int_0^{2\pi} \Big(1-e^{-|I_\theta|}\Big)\frac{d\theta}{2\pi}.$$ Finally we consider the convex function $f(t) = e^{-t}-1$ and the probability measure $\frac{d\theta}{2\pi}$ and put $h(\theta) = |I_\theta|.$ Jensen's inequality gives 
	$$
	f\Big(\int_0^{2\pi}h(\theta)\frac{d\theta}{2\pi}\Big) \leq \int_0^{2\pi}f\left(h(\theta)\right)\frac{d\theta}{2\pi},$$ which means 
	$$
	\begin{array}{*2{>{\displaystyle}l}}
		\int_0^{2\pi} \Big(1-e^{-|I_\theta|}\Big)\frac{d\theta}{2\pi} & \leq 1 - \exp\Big(-\int_0^{2\pi}|I_\theta|\ \frac{d\theta}{2\pi}\Big) \\ & \\ & = 1 - \exp\Big(-\int_0^{2\pi}\Big(\int_{\Omega_\theta} r\ dr\Big)\ d\theta\Big) \\ & \\ & = 1 - e^{-|\Omega|}.
	\end{array}$$	
\end{proof}

We finish the section with some examples of sets $\Omega$ with infinite Lebesgue measure for which the Toeplitz operator with symbol $F=\chi_\Omega$ has norm as small as we want.

\begin{proposition}\label{prop:small-norm} For every $\varepsilon > 0$ there exists $\Omega$ with infinite Lebesgue measure such that 
		$$
		\int_\Omega |f(z)|^2 d\lambda(z) \leq \varepsilon \int_{\mathbb C} |f(z)|^2 d\lambda(z)
		$$ for every $f\in {\mathcal F}^2.$
\end{proposition}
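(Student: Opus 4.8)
The plan is to use radial symmetry to reduce the statement to a one-dimensional estimate, and then to take $\Omega$ to be a union of very thin circular annuli whose radii grow geometrically.

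First I would observe that $\int_\Omega|f(z)|^2\,d\lambda(z)=\langle T_{\chi_\Omega}f,f\rangle$ and $T_{\chi_\Omega}\geq 0$, so the asserted inequality for all $f\in\mathcal F^2(\mathbb C)$ is equivalent to $\|T_{\chi_\Omega}\|\leq\varepsilon$. Choosing $\Omega$ radial, $\Omega=\{z\in\mathbb C:\ |z|\in E\}$ with $E\subset[0,+\infty)$ measurable, the change to polar coordinates performed before Lemma \ref{lem:firstbound_integral} gives, for $f=\sum_{n\geq0}b_ne_n$ with $\sum_n|b_n|^2=1$,
$$\int_\Omega|f(z)|^2\,d\lambda(z)=\sum_{n=0}^\infty|b_n|^2\int_I\frac{t^n}{n!}e^{-t}\,dt\leq\sup_{n\geq0}\frac1{n!}\int_I t^ne^{-t}\,dt,$$
where $I=\{\pi r^2:\ r\in E\}$, and the substitution $t=\pi r^2$ gives $|\Omega|=|I|$. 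So it is enough to produce, for each $\varepsilon>0$, a measurable $I\subset[0,+\infty)$ with $|I|=+\infty$ and $\sup_{n\geq0}\frac1{n!}\int_I t^ne^{-t}\,dt\leq\varepsilon$.

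For the construction I would fix a large $R>4$ and a small $\delta>0$ (both depending on $\varepsilon$) and set $I=\bigcup_{k\geq1}[R^k,R^k+\delta]$, so $|I|=+\infty$. The functions $h_n(t):=t^ne^{-t}/n!$ are probability densities, increasing on $[0,n]$ and decreasing on $[n,+\infty)$, with $\max_t h_n(t)=h_n(n)\leq(2\pi n)^{-1/2}$ for $n\geq1$ by Stirling. Fixing $n$, I would split $\int_Ih_n=\sum_{k\geq1}\int_{R^k}^{R^k+\delta}h_n$ according to the position of $R^k$ relative to the peak $t=n$. Because the radii are geometrically spaced with ratio $R>4$, at most one index has $R^k\in[n/2,2n]$, and its contribution is $\leq\delta\,h_n(n)\leq\delta(2\pi n)^{-1/2}$; since such an index forces $n\geq R/2$, this term is $\leq\delta(\pi R)^{-1/2}$. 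For the indices with $R^k\geq2n$ one uses that $h_n$ decreases there and that consecutive terms decay by a factor $R^ne^{-R^k(R-1)}\leq e^{n(\ln R-2(R-1))}$, so the sum is controlled by its first term $h_n(2n)\leq(2/e)^n$ and contributes $\leq2\delta$ for $n\geq1$; for $n=0$ the corresponding sum is $\sum_k(e^{-R^k}-e^{-R^k-\delta})\leq2e^{-R}$. For the indices with $R^k\leq n/2$, monotonicity of $h_n$ on $[0,n]$ shows there are at most $\log_R n$ of them, each bounded by $\delta\,h_n(n/2+\delta)$, and $h_n(n/2+\delta)$ decays geometrically in $n$, so their total is at most $C\delta/\ln R$ for an absolute $C$. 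Altogether
$$\frac1{n!}\int_I t^ne^{-t}\,dt\leq\frac{\delta}{\sqrt{\pi R}}+2\delta+\frac{C\delta}{\ln R}+2e^{-R}$$
for every $n\geq0$; choosing $R$ so large that $2e^{-R}<\varepsilon/2$ and then $\delta$ so small that the first three terms sum to less than $\varepsilon/2$ gives the bound. Finally $E=\{\sqrt{t/\pi}:\ t\in I\}$ and $\Omega=\{z:\ |z|\in E\}$ satisfy $|\Omega|=|I|=+\infty$.

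The only real difficulty is the uniformity in $n$: the set $I$ has to dodge the bulk of every density $h_n$ at once. The geometric spacing of the radii is precisely what makes each peak $t=n$ close to at most one annulus and every other annulus negligible, while taking the innermost radius $R$ large disposes of the small values of $n$, whose mass for $h_n$ sits near the origin far from $I$; everything else reduces to routine Stirling- and Chernoff-type tail estimates.
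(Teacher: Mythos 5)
Your argument is correct, but it takes a genuinely different route from the paper's. You exploit the radial diagonalization already set up before Lemma \ref{lem:firstbound_integral}: for a radial $\Omega$ the quadratic form of $T_{\chi_\Omega}$ is diagonal in the basis $(e_n)$ with eigenvalues $\frac{1}{n!}\int_I t^ne^{-t}\,dt$, and you construct an infinite-measure set $I$ (thin intervals of length $\delta$ placed at the geometrically spaced points $R^k$) that simultaneously avoids the bulk of every Gamma density $t^ne^{-t}/n!$. The uniformity in $n$, which is the only delicate point, does hold as you argue: for $R>4$ at most one $R^k$ lies in $[n/2,2n]$ and contributes at most $\delta(\pi R)^{-1/2}$, the intervals beyond $2n$ form a rapidly convergent geometric sum dominated by $\delta(2/e)^n$ (with the separate bound $2e^{-R}$ for $n=0$), and the at most $\log_R n$ intervals below $n/2$ are suppressed by the factor $h_n(n/2+\delta)\lesssim(e^{1/2}/2)^n$. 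The paper instead takes $\Omega$ to be a union of well-separated disks of small area and uses the sub-mean-value property of $|f|^2$ to obtain $|f(z)|^2e^{-\pi|z|^2}\le C_R^{-1}\int_{D(z,R)}|f|^2\,d\lambda$ with $C_R=\int_0^R 2\pi re^{-\pi r^2}\,dr$, which after Fubini gives the clean general criterion $\|T_{\chi_\Omega}\|\le C_R^{-1}\sup_w\left|\Omega\cap D(w,R)\right|$. The paper's route is shorter, needs no symmetry of $\Omega$, and isolates a reusable sufficient condition for smallness of the norm; yours stays entirely within the radial calculus developed in the rest of the paper, produces a rotation-invariant example (thin annuli rather than scattered disks), and makes explicit that the eigenvalues of a radial Toeplitz operator can be uniformly small even when the symbol has support of infinite measure.
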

\begin{proof}
	Let us consider arbitrary $f\in {\mathcal F}^2,$ $R > 0$ and $\Omega\subset {\mathbb C}.$ We denote $C_R = \int_0^R 2\pi r e^{-\pi 
		r^2}\ dr.$ From
	$$
	|f(0)|^2\leq \frac{1}{2\pi}\int_0^{2\pi}|f(re^{i\theta})|^2\ d\theta\ \ \forall r > 0,
	$$ we get
	$$
	\begin{array}{*2{>{\displaystyle}l}}
		|f(0)|^2\cdot C_R & \leq \int_0^R\left(\int_0^{2\pi}|f(re^{i\theta})|^2\ d\theta\right) re^{-\pi r^2}\ dr\\ & \\ & = \int_{D(0,R)}|f(w)|^2 
		e^{-\pi|w|^2}\ dA(w).
	\end{array}
	$$ Then, for every $z\in {\mathbb C},$
	$$
	\begin{array}{*2{>{\displaystyle}l}}
		|f(z)|^2 e^{-\pi|z|^2} & = \left|\left(T_{-z}f\right)(0)\right|^2 \leq C_R^{-1}\cdot \int_{D(0,R)}|\left(T_{-z}f\right)(w)|^2 e^{-\pi|w|^2} dA(w) \\ & \\ & = 
		C_R^{-1}\cdot \int_{D(z,R)}|f(w)|^2 e^{-\pi|w|^2}\ dA(w).
	\end{array}
	$$ Finally
	$$
	\begin{array}{*2{>{\displaystyle}l}}
		\int_\Omega |f(z)|^2 e^{-\pi|z|^2} dA(z) & \leq C_R^{-1}\cdot \int_\Omega\left(\int_{\mathbb C}\chi_{D(w,R)}(z)|f(w)|^2 e^{-\pi|w|^2}\ 
		dA(w)\right)\ dA(z) \\ & \\ & \leq C_R^{-1}\cdot \int_{\mathbb C}|f(w)|^2 e^{-\pi|w|^2}\varphi(w)\ dA(w),
	\end{array}
	$$ where 
	$$
	\varphi(w) = \int_\Omega \chi_{D(w,R)}(z)\ dA(z) = \left|\Omega \cap D(w,R)\right|.
	$$ Now, take $\Omega$ the union of infinitely many disks of Lebesgue measure $\delta = \varepsilon\cdot C_R,$ sufficiently separated so that 
	each disk $D(w, R)$ can only cut one of them. Then $\varphi(w) \leq \varepsilon$ for every $w\in {\mathbb C}$ and the conclusion follows.
\end{proof}

\section{Time-frequency localization operators}

For $F\in L^1({\mathbb C})$ we denote  by $H_F:L^2({\mathbb R})\to L^2({\mathbb R})$ the localization operator 
$$
H_F f = \int_{\mathbb C} F(z)\ \langle f, \pi(z)h_0\rangle\ \pi(z)h_0\ dA(z).$$ Here $h_0(t) = 2^{1/4} e^{-\pi t^2}$ is the Gaussian and $\pi(z)$ is the time-frequency shift, defined for $z = x+i\omega$ as 
$$
\big(\pi(z)f\big)(t) = e^{2\pi i \omega t}f(t-x),\ f\in L^2({\mathbb R}).$$ In case $F$ is the characteristic function of a set $\Omega$ we write $H_\Omega$ instead of $H_{\chi_\Omega}.$ We refer to \cite{cg} or \cite[Chapter 4]{cr} for general facts concerning localization operators.
\par\medskip
For $f,g\in L^2({\mathbb R}),$ the expression 
$$
\big(V_g f\big)(z) := \langle f, \pi(z)g\rangle$$ is the short time Fourier transform of $f$ with window $g,$ known as Gabor transform in the case where the window $g = h_0$ is the Gaussian. 
\par\medskip
If $F$ is real-valued then $H_F$ is a selfadjoint operator on $L^2({\mathbb R}),$ hence
$$
\|H_F\| = \sup_{\|f\|_2 = 1}\left|\langle H_F f,\ f\rangle\right| \leq \sup_{\|f\|_2 = 1}\int_{{\mathbb C}} |F(z)|\cdot\left|\left(V_{h_0} f\right)(z)\right|^2 dA(z).$$ There is a connection between localization operators and Toeplitz operators on the Fock space via de Bargmann transform. 
\par\medskip
The Bargmann transform is the surjective and unitary operator 
$$
{\mathcal B}:L^2({\mathbb R}) \to {\mathcal F}^2({\mathbb C})
$$ defined as
$$
\big({\mathcal B}f\big)(z) = 2^{1/4}\int_{{\mathbb R}}f(t) e^{2\pi t z -\pi t^2-\frac{\pi}{2}z^2}\ dt.
$$ It was introduced in \cite{bargmann} and has the important property that the Hermite functions are mapped into normalized analytic monomials. More precisely, ${\mathcal B}(h_n) = e_n,$ where $h_n$ is defined via the so called Rodrigues formula as 
$$
h_n(t) = \frac{2^{1/4}}{\sqrt{n!}}\left(\frac{-1}{2\sqrt{\pi}}\right)^n e^{\pi t^2}\frac{d^n}{dt^n}\left(e^{-2\pi t^2}\right),\ \ n\geq 0.
$$ Then $\left(h_n\right)_{n\geq 0}$ forms an orthonormal basis for $L^2({\mathbb R}).$ The Gabor transform of Hermite functions is well-known (see for instance \cite[Chapter 1.9]{folland}). In fact, for $z = x + i\xi,$ 
\begin{equation}\label{eq:STFT-hermite}
	\langle h_n, \pi(z)h_0\rangle = e^{-i\pi x\xi - \frac{\pi}{2}|z|^2}\sqrt{\frac{\pi^n}{n!}}\overline{z}^n.\end{equation} Since for $z = x + i\xi$ we have (\cite[3.4.1]{g})
$$
\big(V_{h_0} f\big)(x,-\xi) = e^{i\pi x\xi}\cdot \big({\mathcal B}f\big)(z)\cdot e^{-\frac{\pi|z|^2}{2}}
$$ then, for every $f\in L^2({\mathbb R})$ and $F\in L^1({\mathbb C})\cap L^\infty({\mathbb C})$ we obtain
$$
\int_{{\mathbb C}} |F(z)|\cdot\left|\left(V_{h_0} f\right)(z)\right|^2 dA(z) = \int_{{\mathbb C}} |F(z)|\cdot \left|\big(Bf\big)(z)\right|^2\ d\lambda(z).$$ Consequently, all the estimates in the previous section can be translated into estimates concerning localization operators. 
\par\medskip
Abreu, Speckbacher conjecture in \cite{abreu} that, among all the sets with a given measure, $\|H_\Omega\|$ attains its maximum when $\Omega$ is a disc, up to perturbations of Lebesgue measure zero. This turns out to be equivalent to the validity of inequality (\ref{eq:conjecture-fock}) for every function in the Fock space or, equivalently, to the fact that
$$
\|f\|_2^2 \leq e^{|\Omega|}\int_{{\mathbb C}\setminus\Omega} \left|\big(V_{h_0}f\big)(z)\right|^2 dA(z)\ \ \forall f\in L^2({\mathbb R}).$$ This should be compared with \cite[Theorem 4.1]{fg}. In this regard it is worth noting that Nazarov \cite{n} proved the existence of two absolute constants $A, B$ such that
$$
\|f\|_2^2 \leq Ae^{B\cdot|S|\cdot|\Sigma|}\Big(\int_{{\mathbb R}\setminus S} |f|^2 + \int_{{\mathbb R}\setminus\Sigma}|\widehat{f}|^2\Big)$$ for every $f\in L^2({\mathbb R})$ and for any pair $(S,\Sigma)$ of sets with finite measure.
\par\medskip
From Theorem \ref{th:main} and Proposition \ref{prop} we get the following.

\begin{corollary}
Let $F\in L^1({\mathbb C})\cap L^\infty({\mathbb C})$ be a real-valued and radial symbol with respect to $a\in {\mathbb C}.$ Then 
$$
\|H_F\|\leq \|F\|_\infty \Big(1 - \exp\Big(-\frac{\|F\|_1}{\|F\|_\infty}\Big)\Big).$$
\end{corollary}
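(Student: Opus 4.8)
The plan is to transport Theorem~\ref{th:main} to the $L^2(\mathbb R)$ side through the Bargmann transform, using the dictionary recorded above in this section. Since $F$ is real-valued, $H_F$ is selfadjoint, so, as already observed,
$$
\|H_F\| = \sup_{\|f\|_2=1}\big|\langle H_F f, f\rangle\big| \le \sup_{\|f\|_2=1}\int_{\mathbb C}|F(z)|\cdot\big|\big(V_{h_0}f\big)(z)\big|^2\,dA(z),
$$
and it therefore suffices to bound the supremum on the right by $\|F\|_\infty\big(1-\exp(-\|F\|_1/\|F\|_\infty)\big)$.

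Next I would feed in the identity
$$
\int_{\mathbb C}|F(z)|\cdot\big|\big(V_{h_0}f\big)(z)\big|^2\,dA(z) = \int_{\mathbb C}|F(z)|\cdot\big|\big(\mathcal Bf\big)(z)\big|^2\,d\lambda(z)
$$
established earlier; it comes from the relation $\big(V_{h_0}f\big)(x,-\xi)=e^{i\pi x\xi}\big(\mathcal Bf\big)(z)e^{-\pi|z|^2/2}$ together with the measure-preserving change $\xi\mapsto-\xi$. (That change replaces $F$ by its reflection about the real axis, which for a symbol radial with respect to $a$ is again radial, now with respect to $\overline a$, and has the same $L^1$ and $L^\infty$ norms, so nothing relevant is lost.) Because $\mathcal B:L^2(\mathbb R)\to\mathcal F^2(\mathbb C)$ is a surjective isometry, the map $f\mapsto\mathcal Bf$ carries the unit sphere of $L^2(\mathbb R)$ onto the unit sphere of $\mathcal F^2(\mathbb C)$, whence
$$
\|H_F\| \le \sup_{\|h\|_{\mathcal F}=1}\int_{\mathbb C}|F(z)|\cdot|h(z)|^2\,d\lambda(z).
$$

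The last step is the observation that the proof of Theorem~\ref{th:main} in fact bounds this very quadratic form, not merely $\|T_F\|$: for $F$ real-valued and radial with respect to some point it shows
$$
\int_{\mathbb C}|F(z)|\cdot|h(z)|^2\,d\lambda(z) \le \|F\|_\infty\Big(1-\exp\big(-\tfrac{\|F\|_1}{\|F\|_\infty}\big)\Big)
\qquad\text{whenever }\|h\|_{\mathcal F}=1,
$$
obtained by normalizing $\|F\|_\infty=1$, expanding $h=\sum_p b_p e_p$, passing to polar coordinates as at the beginning of the section, and invoking Lemma~\ref{lem:firstbound_integral} together with the step-function approximation and the shift $T_{-a}$. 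Chaining the three displays proves the corollary. I do not expect any genuine obstacle here; the only point requiring a word of care is the bookkeeping in the reflection $\xi\mapsto-\xi$ and the remark that radiality and both norms of $F$ are preserved by it, which is immediate because $|z|$ is reflection-invariant.
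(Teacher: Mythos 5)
Your argument is correct and is essentially the route the paper intends: the corollary is stated as an immediate consequence of Theorem~\ref{th:main} via the identity $\int_{\mathbb C}|F|\,|V_{h_0}f|^2\,dA=\int_{\mathbb C}|F|\,|\mathcal Bf|^2\,d\lambda$ and the unitarity of the Bargmann transform, exactly as you do. Your two extra remarks --- that the reflection $\xi\mapsto-\xi$ preserves radiality and both norms of $F$, and that the proof of Theorem~\ref{th:main} really bounds the quadratic form $\sup_{\|h\|_{\mathcal F}=1}\int|F|\,|h|^2\,d\lambda$ (equivalently, one may just apply Theorem~\ref{th:main} to the nonnegative radial symbol $|F|$) --- are precisely the details the paper leaves implicit, and you handle them correctly.
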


\begin{corollary}\label{cor:localization-hermite}
Let $\Omega \subset {\mathbb R}^2$ be a set with finite measure. Then, for every $n\in {\mathbb N},$
$$
\left|\langle H_\Omega h_n,\ h_n\rangle\right| \leq 1-e^{-|\Omega|}.$$
\end{corollary}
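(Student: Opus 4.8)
\emph{Proof proposal.} Since $\chi_\Omega$ is real-valued and nonnegative, $H_\Omega$ is selfadjoint and the quadratic form identity recorded earlier gives, with $f=h_n$,
$$
\langle H_\Omega h_n, h_n\rangle = \int_{\mathbb C} \chi_\Omega(z)\,\bigl|\langle h_n, \pi(z)h_0\rangle\bigr|^2\, dA(z) = \int_\Omega \bigl|\bigl(V_{h_0}h_n\bigr)(z)\bigr|^2\, dA(z)\ \geq 0,
$$
so the absolute value on the left-hand side of the statement is superfluous. The plan is then to identify this integral with an integral over the Fock space and invoke Proposition \ref{prop}.

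First I would substitute the explicit Gabor transform of the Hermite functions from \eqref{eq:STFT-hermite}: for $z=x+i\xi$,
$$
\bigl|\langle h_n, \pi(z)h_0\rangle\bigr|^2 = \frac{\pi^n}{n!}\,|z|^{2n}\,e^{-\pi|z|^2} = \bigl|e_n(z)\bigr|^2\,e^{-\pi|z|^2},
$$
the last equality being just $|e_n(z)|^2 = \tfrac{\pi^n}{n!}|z|^{2n}$. Hence
$$
\langle H_\Omega h_n, h_n\rangle = \int_\Omega \bigl|e_n(z)\bigr|^2\,d\lambda(z) = \int_\Omega \bigl|T_0(e_n)(z)\bigr|^2\,d\lambda(z),
$$
and Proposition \ref{prop} applied with $a=0$ yields the bound $1-e^{-|\Omega|}$ directly.

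There is essentially no obstacle here; the proof is a bookkeeping assembly of identities already established in the excerpt. The only point requiring a word of care is that the correspondence $(V_{h_0}f)(x,-\xi) = e^{i\pi x\xi}(\mathcal B f)(z)e^{-\pi|z|^2/2}$ with $\mathcal B h_n = e_n$ involves a reflection $\xi\mapsto-\xi$; this is harmless because the integrand $|e_n(z)|^2 e^{-\pi|z|^2}$ is radial, so one may equally well pass through the Bargmann transform and replace $\Omega$ by its reflection across the real axis, a set of the same Lebesgue measure, before applying Proposition \ref{prop}. Either route gives the stated inequality for every $n$ (including $n=0$, where it recovers Example \ref{ex:reproducing-kernel} at $w=0$).
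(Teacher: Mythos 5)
Your proposal is correct and follows essentially the paper's intended route: the paper states this corollary as an immediate consequence of Proposition \ref{prop} via the identity $\langle H_\Omega h_n, h_n\rangle = \int_\Omega |(V_{h_0}h_n)(z)|^2\,dA(z)$ together with the correspondence between the Gabor transform with Gaussian window and the Fock space. Your use of the explicit formula \eqref{eq:STFT-hermite} to get $|(V_{h_0}h_n)(z)|^2 = |e_n(z)|^2 e^{-\pi|z|^2}$ directly is a clean way to bypass the reflection in the Bargmann correspondence, but it is the same argument in substance.
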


We fix a non-zero window $g\in L^2({\mathbb R}).$ The modulation space $M^1({\mathbb R}),$ also known as Feichtinger algebra, is the set of tempered distributions $f\in {\mathcal S}^\prime({\mathbb R})$ such that 
$$
\|f\|_{M^1}:= \int_{{\mathbb C}}\left|\langle f, \pi(z)g\rangle\right| dA(z) < +\infty.$$ The use of different windows $g$ in the definition of $M^1({\mathbb R})$ yields the same spaces with equivalent norms. It is well known that $M^1({\mathbb R})$ is continuously included in $L^2({\mathbb R})$ and 
$$
\|f\|_2 = \|V_g f\|_ 2 \leq \|V_g f\|_1$$ whenever $f\in M^1({\mathbb R})$ and $\|g\|_2 = 1.$ See for instance \cite[3.2.1]{g} for the first identity.

\begin{proposition} 
Let $\Omega \subset {\mathbb R}^2$ be a set with finite measure. Then, for every $f\in M^1({\mathbb R})$ and $n\in {\mathbb N}_0$ we have
$$
\int_\Omega \left|\left(V_{h_0} f\right)(z)\right|^2 dA(z) \leq \|V_{h_n} f\|_1^2\cdot \big(1-e^{-|\Omega|}\big).$$
\end{proposition}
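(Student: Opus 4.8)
\emph{Proof proposal.} The plan is to reduce the inequality, by means of the short-time Fourier transform inversion formula, to the single-monomial estimate already proved in Proposition \ref{prop}. Since $f\in M^1({\mathbb R})$, we have $V_{h_n}f\in L^1({\mathbb C})$, and as $\langle h_n,h_n\rangle = 1\neq 0$ the reconstruction formula
$$
f = \int_{\mathbb C} \big(V_{h_n}f\big)(w)\,\pi(w)h_n\ dA(w)
$$
holds as an absolutely convergent, $L^2$-valued integral (see \cite{g}). Pairing both sides with $\pi(z)h_0$ gives
$$
\big(V_{h_0}f\big)(z) = \int_{\mathbb C} \big(V_{h_n}f\big)(w)\,\langle \pi(w)h_n,\pi(z)h_0\rangle\ dA(w),
$$
and by the covariance of the STFT together with $|V_gh(u)| = |V_hg(-u)|$ one gets $|\langle \pi(w)h_n,\pi(z)h_0\rangle| = |\big(V_{h_0}h_n\big)(z-w)|$. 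The crucial observation is then that, by \eqref{eq:STFT-hermite},
$$
\big|\big(V_{h_0}h_n\big)(u)\big|^2 = \frac{\pi^n}{n!}\,|u|^{2n}e^{-\pi|u|^2} = |e_n(u)|^2\,e^{-\pi|u|^2},
$$
that is, $\big|\big(V_{h_0}h_n\big)(u)\big|^2\,dA(u) = |e_n(u)|^2\,d\lambda(u)$, which transfers the problem back to the Fock space.

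Next I would apply the Cauchy--Schwarz inequality to the convolution integral, writing $|V_{h_n}f(w)| = |V_{h_n}f(w)|^{1/2}\cdot|V_{h_n}f(w)|^{1/2}$, to obtain
$$
\big|\big(V_{h_0}f\big)(z)\big|^2 \leq \|V_{h_n}f\|_1\int_{\mathbb C} |V_{h_n}f(w)|\,\big|\big(V_{h_0}h_n\big)(z-w)\big|^2\ dA(w).
$$
Integrating over $\Omega$ and interchanging the order of integration (Tonelli, all integrands being non-negative),
$$
\int_\Omega \big|\big(V_{h_0}f\big)(z)\big|^2\,dA(z) \leq \|V_{h_n}f\|_1\int_{\mathbb C} |V_{h_n}f(w)|\Big(\int_{\Omega-w}|e_n(u)|^2\,d\lambda(u)\Big)\ dA(w).
$$
Now Proposition \ref{prop}, in the translated form $\int_{\Omega-w}|e_n(u)|^2\,d\lambda(u) = \int_\Omega |T_w(e_n)(u)|^2\,d\lambda(u) \le 1-e^{-|\Omega|}$, bounds the inner integral by $1-e^{-|\Omega|}$ uniformly in $w$, while the outer integral is precisely $\|V_{h_n}f\|_1$. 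This yields
$$
\int_\Omega \big|\big(V_{h_0}f\big)(z)\big|^2\,dA(z) \leq \|V_{h_n}f\|_1^2\,\big(1-e^{-|\Omega|}\big),
$$
which is the claimed bound.

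I expect the only delicate point to be the rigorous justification of the inversion formula and of the Fubini--Tonelli interchange, i.e. checking that the pairing $\langle\,\cdot\,,\pi(z)h_0\rangle$ really commutes with the vector-valued integral defining $f$. For $f\in M^1({\mathbb R})$ this is standard: $V_{h_n}f\in L^1$, the map $w\mapsto \pi(w)h_n$ is bounded and continuous into $L^2({\mathbb R})$, and $w\mapsto |\big(V_{h_0}h_n\big)(z-w)|$ is in $L^1({\mathbb C})$ (being a translate of $|u|^n e^{-\pi|u|^2/2}$ up to a constant), so everything converges absolutely and all the double integrals above have non-negative integrands. Apart from this bookkeeping, the argument is a routine manipulation plus the already-proved Proposition \ref{prop}.
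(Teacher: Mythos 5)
Your argument is correct, but it takes a genuinely different route from the paper's. The paper normalizes $\|V_{h_n}f\|_1=1$, identifies $f$ as an element of the bipolar of $B=\{\pi(z)h_n:\ z\in{\mathbb C}\}$, approximates $f$ in $L^2$ by finite absolutely convex combinations $\sum_j\alpha_j\pi(z_j)h_n$, and applies Minkowski's inequality in $L^2(\Omega)$ together with the termwise bound $\left|\langle H_{\Omega-z_j}h_n,h_n\rangle\right|\le 1-e^{-|\Omega|}$ (Corollary \ref{cor:localization-hermite}), finally passing to the $L^2$-limit. You replace this duality-plus-approximation scheme by the continuous reconstruction formula $f=\int_{\mathbb C}(V_{h_n}f)(w)\,\pi(w)h_n\,dA(w)$, which converges absolutely because $V_{h_n}f\in L^1({\mathbb C})$, followed by the Cauchy--Schwarz splitting $|V_{h_n}f|=|V_{h_n}f|^{1/2}\cdot|V_{h_n}f|^{1/2}$ and Tonelli; both proofs then funnel into the same key input, the single-monomial estimate of Proposition \ref{prop} applied to $\Omega-w$. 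Your intermediate identities are correct: $|\langle\pi(w)h_n,\pi(z)h_0\rangle|=|(V_{h_0}h_n)(z-w)|$ by covariance, and $|(V_{h_0}h_n)(u)|^2=|e_n(u)|^2e^{-\pi|u|^2}$ by \eqref{eq:STFT-hermite}, so the inner integral is exactly $\int_{\Omega-w}|e_n|^2\,d\lambda\le 1-e^{-|\Omega|}$ and the outer integral contributes the second factor of $\|V_{h_n}f\|_1$. What each approach buys: yours avoids the bipolar theorem and the limiting argument entirely and is more directly quantitative (no normalization needed), at the cost of invoking the vector-valued inversion formula, whose justification for $f\in M^1$ is standard as you note; the paper's version needs only the discrete triangle inequality but requires the functional-analytic approximation step. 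One remark that applies equally to both proofs: Proposition \ref{prop} and Corollary \ref{cor:localization-hermite} are stated for $n\in{\mathbb N}$ while the present statement allows $n=0$; that case is supplied by Example \ref{ex:reproducing-kernel} (or by observing that the proof of Proposition \ref{prop} works verbatim for $n=0$).
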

\begin{proof} It suffices to prove the proposition under the additional assumption that $\|V_{h_n}f\|_1 = 1.$ Fixed $n\in {\mathbb N}_0$ we consider the set
$$
B:=\left\{\pi(z)h_n:\ z\in {\mathbb C}\right\} \subset L^2({\mathbb R}).$$ Then 
$$
B^\circ := \left\{g\in L^2({\mathbb R}):\ \left|\langle g, \pi(z)h_n\rangle\right|\leq 1\right\} = \left\{g\in L^2({\mathbb R}):\ \|V_{h_n}g\|_\infty\leq 1\right\}.$$ We have
$$
\left|\langle f, g\rangle\right| = \left|\langle V_{h_n}f, V_{h_n}g\rangle\right| \leq \|V_{h_n}f\|_1\cdot \|V_{h_n}g\|_\infty \leq 1$$ for every $g\in B^\circ,$ which means that $f\in B^{\circ\circ}.$ According to the bipolar theorem, 
$$
f = L^2-\lim_{k\to \infty} f_k$$ where each $f_k$ is in the absolutely convex hull of $B.$ For each $k\in {\mathbb N}$ we can find scalars $(\alpha_j)_{j= 1}^N$ and points $(z_j)_{j=1}^N$ such that $f_k = \sum_{j=1}^N\alpha_j \pi(z_j)h_n$ and $\sum_{j=1}^N |\alpha_j| \leq 1.$ Then
$$
\begin{array}{*2{>{\displaystyle}l}}
\Big(\int_\Omega \left|\left(V_{h_0} f_k\right)(z)\right|^2 dA(z)\Big)^{\frac{1}{2}}  & = \Big(\int_\Omega \left|\langle f_k, \pi(z)\varphi\rangle\right|^2 dA(z)\Big)^{\frac{1}{2}} \\ & \\ & \leq \sum_{j=1}^N |\alpha_j|\Big(\int_\Omega \left|\langle \pi(z_j)h_n, \pi(z)\varphi\rangle\right|^2 dA(z)\Big)^{\frac{1}{2}} \\ & \\ & = \sum_{j=1}^N |\alpha_j|\Big(\int_\Omega \left|\langle h_n, \pi(z-z_j)\varphi\rangle\right|^2 dA(z)\Big)^{\frac{1}{2}}\\ & \\ & = \sum_{j=1}^N |\alpha_j|\Big(\int_{\Omega-z_j} \left|\langle h_n, \pi(z)\varphi\rangle\right|^2 dA(z)\Big)^{\frac{1}{2}}\\ & \\ & = \sum_{j=1}^N |\alpha_j|\left|\langle H_{\Omega-z_j}h_n, h_n\rangle\right|^{\frac{1}{2}} \leq \big(1-e^{-|\Omega|}\big)^{\frac{1}{2}}.
\end{array}
$$ Finally,
$$
\int_\Omega \left|\left(V_{h_0} f\right)(z)\right|^2 dA(z) = \lim_{k\to \infty}\int_\Omega \left|\left(V_{h_0} f_k\right)(z)\right|^2 dA(z) \leq 1-e^{-|\Omega|}.$$
\end{proof}

\par\medskip
The next result is a direct consequence of Proposition \ref{prop:small-norm} and should be compared with \cite[Proposition 8]{abreu}.
\begin{corollary} For every $\varepsilon > 0$ there exists $\Omega$ with infinite Lebesgue measure such that
		$$
		\|H_\Omega\|\leq \varepsilon.
		$$
\end{corollary}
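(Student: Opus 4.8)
The plan is to transfer Proposition~\ref{prop:small-norm} from the Fock space to $L^2({\mathbb R})$ through the Bargmann transform, in the same way that the paragraph preceding Corollary~\ref{cor:localization-hermite} handles sets of finite measure. First I would fix $\varepsilon > 0$ and apply Proposition~\ref{prop:small-norm} to produce a set $\Omega_0\subset {\mathbb C}$ of infinite Lebesgue measure such that
$$
\int_{\Omega_0}\left|g(z)\right|^2\,d\lambda(z)\leq \varepsilon\int_{\mathbb C}\left|g(z)\right|^2\,d\lambda(z)\qquad\text{for every }g\in {\mathcal F}^2({\mathbb C}),
$$
and then set $\Omega := \{(x,-\xi):(x,\xi)\in\Omega_0\}$, which again has infinite Lebesgue measure.

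Next I would note that, since $\chi_\Omega\geq 0$, the operator $H_\Omega$ is positive and self-adjoint, so that $\|H_\Omega\| = \sup_{\|f\|_2=1}\langle H_\Omega f,f\rangle \leq \sup_{\|f\|_2=1}\int_\Omega\left|\left(V_{h_0}f\right)(z)\right|^2\,dA(z)$, exactly as in the general discussion of $\|H_F\|$ above. To bound the right-hand side I would use the identity $\left(V_{h_0}f\right)(x,-\xi) = e^{i\pi x\xi}\left({\mathcal B}f\right)(x+i\xi)e^{-\pi|x+i\xi|^2/2}$ recalled in the previous section, which gives, after the change of variables $\xi\mapsto -\xi$ and for any $f$ with $\|f\|_2 = 1$,
$$
\int_\Omega\left|\left(V_{h_0}f\right)(z)\right|^2\,dA(z) = \int_{\Omega_0}\left|\left({\mathcal B}f\right)(z)\right|^2\,d\lambda(z)\leq \varepsilon\,\|{\mathcal B}f\|_{\mathcal F}^2 = \varepsilon,
$$
where the inequality is the choice of $\Omega_0$ and the last equality is the unitarity of ${\mathcal B}$. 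Taking the supremum over $\|f\|_2 = 1$ then yields $\|H_\Omega\|\leq \varepsilon$.

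The only mildly delicate point — and I do not expect it to be a real obstacle — is that $\chi_\Omega$ is \emph{not} in $L^1({\mathbb C})$, so $H_\Omega$ is not literally an instance of the operator $H_F$ as defined at the start of the section. I would dispose of this by observing that the sesquilinear form $(f,g)\mapsto\int_\Omega\left(V_{h_0}f\right)(z)\overline{\left(V_{h_0}g\right)(z)}\,dA(z)$ is still bounded, by $\|f\|_2\|g\|_2$, because $V_{h_0}$ is an isometry of $L^2({\mathbb R})$ into $L^2({\mathbb C})$; hence it defines a bounded positive self-adjoint operator, which is what we mean by $H_\Omega$, and the same convention is already implicit in Proposition~\ref{prop:small-norm}. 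Moreover the pointwise relation between $V_{h_0}f$ and ${\mathcal B}f$ holds for every $f\in L^2({\mathbb R})$ with no integrability hypothesis, so the chain of (in)equalities above is justified by Tonelli's theorem without any further care.
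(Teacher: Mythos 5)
Your argument is correct and is exactly the ``direct consequence'' the paper has in mind: transfer Proposition~\ref{prop:small-norm} to $L^2({\mathbb R})$ via the identity $\left(V_{h_0}f\right)(x,-\xi)=e^{i\pi x\xi}\left({\mathcal B}f\right)(x+i\xi)e^{-\pi|x+i\xi|^2/2}$ and the unitarity of the Bargmann transform, with the self-adjointness bound on $\|H_\Omega\|$ already set up earlier in the section. Your extra remarks (the reflection $\xi\mapsto-\xi$, and the definition of $H_\Omega$ via the bounded quadratic form when $\chi_\Omega\notin L^1$) are sound and only make explicit what the paper leaves implicit.
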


\end{document}